\numberwithin{equation}{section}
\newtheorem{theorem}{Theorem}[section]
\newtheorem{lemma}[theorem]{Lemma}
\newtheorem{fact}[theorem]{Fact}
\newtheorem{dfn}[theorem]{Definition}
\newtheorem{obs}[theorem]{Observation}
\newtheorem{claim}{Claim}[theorem]
\theoremstyle{definition}
\theoremstyle{remark}
\def\myheads#1;#2;{
\pagestyle{myheadings}
\markboth{{\sc\hfill #1\hfill\protect\makebox[0cm][r]{\rm\today}}}
{{\sc\protect\makebox[0cm][l]{\rm\today}\hfill #2\hfill}}
}
\newif\ifdeveloping
\newif\ifcommented
\newcommand{\comm}[1]{}
\renewcommand{\comm}[1]{
\fbox{\fbox{\begin{minipage}{300pt}#1\end{minipage}}
}}
\newcommand{\mc}[1]{\mathcal{#1}}
\newcommand{\mbb}[1]{\mathbb{#1}}
\newcommand{\uhp}{\upharpoonright}
\newcommand{\omg}{{\omega_1}}
\newcommand{\gr}{G=(V,E)}
\newcommand{\setm}{\setminus}
\newcommand{\subs}{\subset}
\def\<{\left\langle}
\def\>{\right\rangle}
\def\br#1;#2;{\bigl[ {#1} \bigr]^ {#2} }
\newcommand{\oo}{{\omega}}
\author[D. T. Soukup]{D\'aniel T. Soukup}
\address
      {University of Toronto, Toronto, ON, Canada}
\email{daniel.soukup@mail.utoronto.ca}
\urladdr{http://www.math.toronto.edu/$\sim$dsoukup}
\subjclass[2010]{03E05, 03C98, 05C63}
\keywords{elementary submodels, Davies-trees, clouds, chromatic number}
\title{Davies-trees in infinite combinatorics}
\date{\today}
\begin{document}

\maketitle

\begin{abstract}
This short note, prepared for the Logic Colloquium 2014, provides an introduction to Davies-trees and presents new applications in infinite combinatorics. In particular, we prove that: 
every $n$-almost disjoint family of sets is essentially disjoint for any  $n\in \mbb N$ \cite{kopefam}; $\mbb R^2$ is the union of $n+2$ clouds if the continuum is at most $\aleph_n$ for any $n\in \mbb N$ \cite{clouds}; every uncountably chromatic graph contains $n$-connected uncountably chromatic subgraphs for every $n\in \mbb N$ \cite{kopeconn}.
\end{abstract}

\section{Introduction}

The goal of this paper is to introduce the reader to a somewhat overlooked set theoretic tool involving sequences 
(but not chains) of elementary submodels and their applications. The literature contains several well written introductions to (chains of) elementary submodels and their applications in topology and combinatorics; see the papers \cite{dowel, gesel, soukel}, the book \cite{justel}, the presentation \cite{sigmaD} or the blog post \cite{mike}. Nowadays every other proof in set theory and general topology uses elementary submodels and we will hence assume basic familiarity with this tool. Nonetheless, we include a very short, and over-simplified, introduction: we will work with elementary submodels $M$ of $H(\Theta)$ (sets of hereditary cardinality $<\Theta$ for some large enough cardinal $\Theta$). $H(\Theta)$ captures a large fragment of the set theoretic universe (i.e. almost all of ZFC is satisfied) and $M$ being an elementary submodel means that a formula $\phi$ with parameters from $M$ is true in $M$ iff it is true in $H(\Theta)$. 

How are elementary submodels useful? If a structure $\mc X$ of arbitrary size is intersected with say a countable elementary submodel $M$ so that $\mc X\in M$ then the structure $\mc X\cap M$ will be very similar to $\mc X$  but has countable size; we say that properties of $\mc X$ \emph{reflect} to $M\cap \mc X$. It is easy to imagine that such a construction is useful in many situations.   

Why are there always elementary submodels which have all the parameters we need for a certain proof? The downward L\"owenheim-Skolem theorem says that whenever $\mc A\subs H
(\Theta)$ is countable then we can find a countable elementary submodel $M$ of $H(\Theta)$ so that $\mc A\subs M$. i.e. $M$ contains everything relevant to our particular situation. We regularly use the following

\begin{fact}\label{elfact} Suppose that $M$ is an elementary submodel of $H(\Theta)$ and $X\in M$. If $X$ is countable then $X\subseteq M$ or equivalently, if $X\setm M$ is nonempty then $X$ is uncountable.
\end{fact}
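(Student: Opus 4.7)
The plan is to use elementarity twice: once to pull a witnessing surjection from $\omega$ into $M$, and once to read off its values. First I would note that the statement ``$X$ is countable'' is expressible as $\exists f\, (f\colon \omega \to X \text{ is a surjection})$, with $X$ as the only parameter. Since this statement is true in $H(\Theta)$ and $X \in M$, by elementarity there exists such an $f$ in $M$. So we may fix $f \in M$ with $f\colon \omega \to X$ surjective.

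Next I would verify that $\omega \subseteq M$. Since $\omega$ is definable without parameters, $\omega \in M$. Then I would argue by induction that every natural number lies in $M$: we have $\emptyset \in M$ because $\emptyset$ is definable, and if $n \in M$ then the set $n \cup \{n\}$ is definable from $n$ (a parameter in $M$), so by elementarity its unique witness, namely $n+1$, belongs to $M$. Hence $\omega \subseteq M$.

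Finally, for each $n \in \omega$, both $f$ and $n$ lie in $M$, and the formula $\exists y\,(y = f(n))$ has a unique witness, which must therefore lie in $M$ by elementarity. Thus $f(n) \in M$ for every $n \in \omega$. Since $f$ is surjective onto $X$, we conclude $X = \operatorname{ran}(f) \subseteq M$, as required. The equivalent contrapositive formulation is immediate.

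The argument is essentially a clean application of the definition of elementary submodel, so there is no real obstacle; the only mildly subtle point is realizing that one must separately check $\omega \subseteq M$ (not just $\omega \in M$) in order to evaluate $f$ at each natural number and keep the value inside $M$.
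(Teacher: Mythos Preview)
Your argument is correct and is the standard proof of this well-known fact. The paper itself states Fact~\ref{elfact} without proof, treating it as background, so there is nothing to compare against; your write-up would serve perfectly well as the omitted justification.

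One trivial quibble: your formulation ``$X$ is countable'' $\Leftrightarrow$ ``$\exists f\,(f\colon\omega\to X$ onto$)$'' fails when $X=\emptyset$, since there is no surjection $\omega\to\emptyset$. Of course $\emptyset\subseteq M$ holds vacuously, so simply dispose of that case first (or phrase countability via an injection $X\hookrightarrow\omega$ and argue with inverse images instead).
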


In the next sections, we continue with defining Davies-trees and explaining the simplest way of constructing them. Next, we review applications of Davies-trees prior our work. We continue with three new applications in the form of alternate and simplified proofs to well known theorems from infinite combinatorics. We finish with a couple of remarks on work in progress and further applications. 

We believe that the use of Davies-trees, beyond their simplicity, provides the \emph{right} way of explaining and proving several results (including the ones presented below); we encourage the interested reader to compare our work with the original proofs of these results.

\section{Chains versus trees of elementary submodels}

Roughly speaking, single elementary submodels are generally used for reflection arguments while chains of elementary submodels usually provide the base for an inductive or recursive proof. In other words, we express our structure as an increasing chain of nice substructures (i.e. small in cardinality but similar in nature) and the transfinite induction/recursion follows an enumeration which is based on this chain. In cases when we are forced to use increasing chains of \emph{countable} elementary submodels our tool becomes restricted to structures of size $\omega_1$. Hence, in many cases, the Continuum Hypothesis\footnote{``independence reared its ugly head'' as P. Erd\H os used to say.} appears if the structure at hand has size $2^\oo$.

Is there a natural way to cover large uncountable structures by countable elementary submodels in a useful way i.e. in a way so that our tricks and tools from previous applications transfer? This was done by Roy. O. Davies \cite{davies}, and hence the name \emph{Davies-trees}, while answering a question of Sierpinski; see Section \ref{first} for further details.

The simple idea is that we can always cover a structure $\mc X$ with a continuous chain of elementary submodels of size $<|\mc X|$ so lets see what happens if we repeat this process and cover each elementary submodel again with chains of smaller submodels, and those submodels with chains of smaller submodels and so on...

\begin{fact}\label{main} Suppose that $\mc A$ is a countable set and $\mc X$ is an arbitrary set. Then there is a large enough cardinal $\Theta$ and a sequence of $\mc M=(M_\alpha)_{\alpha<\kappa}$ of countable elementary submodels of $H(\Theta)$ so that
\begin{enumerate}
	\item $\{\mc X\}\cup \mc A\subs M_\alpha$ for all $\alpha<\kappa$,
	\item ${\mc X\subs \bigcup_{\alpha<\kappa} M_\alpha}$,
	\item for every $\beta<\kappa$ there is $m_\beta\in \mbb N$ and models $N_{\beta,i}\prec H(\Theta)$ such that $ \{\mc X\}\cup \mc A\subs N_{\beta,i}$ for $i<m_\beta$ and $$\bigcup\{M_\alpha:\alpha<\beta\}=\bigcup\{N_{\beta,i}:i<m_\beta\}.$$
\end{enumerate}
\end{fact}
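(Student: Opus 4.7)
The plan is to prove Fact \ref{main} by transfinite induction on $\lambda = |\mc X|$, building the sequence $\mc M$ as a lexicographic concatenation of Davies-sequences supplied by the inductive hypothesis on strictly smaller elementary submodels. The base case $\lambda \leq \oo$ is immediate: with $\Theta$ chosen large enough, downward L\"owenheim-Skolem supplies a single countable $M_0 \prec H(\Theta)$ containing $\{\mc X\}\cup \mc A$, and Fact \ref{elfact} then gives $\mc X \subs M_0$, so $\kappa=1$ works (with $m_0=0$).

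For the inductive step with $\lambda > \oo$, I would first fix a continuous increasing chain $(P_\xi)_{\xi<\mu}$ of elementary submodels of $H(\Theta)$, each of cardinality strictly below $\lambda$, each containing $\{\mc X\}\cup \mc A$, and with $\mc X \subs \bigcup_\xi P_\xi$. Here $\mu=\lambda$ when $\lambda$ is regular, or $\operatorname{cf}(\lambda)$ with cofinally growing sizes when $\lambda$ is singular; Tarski--Vaught together with continuity guarantees that each limit union $P_\xi = \bigcup_{\xi'<\xi}P_{\xi'}$ is itself an elementary submodel. For each $\xi$ I apply the inductive hypothesis to $P_\xi$ (with parameter set $\mc A \cup \{\mc X\}$), obtaining a Davies-sequence $(M^\xi_\alpha)_{\alpha<\kappa_\xi}$ of countable submodels covering $P_\xi$. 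Finally, I define $\mc M$ by concatenating these in lexicographic order of the pairs $(\xi,\alpha)$.

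Conditions (1) and (2) are clear from the construction. For (3), at a stage $\beta$ corresponding to lex-index $(\xi,\alpha)$ one splits
\[
\bigcup_{\beta'<\beta} M_{\beta'} \;=\; \bigcup_{\xi'<\xi} P_{\xi'} \;\cup\; \bigcup_{\alpha'<\alpha} M^\xi_{\alpha'}.
\]
The first union is either empty, equal to $P_\gamma$ when $\xi=\gamma+1$, or equal (by continuity) to $P_\xi$ itself when $\xi$ is a limit, so it contributes at most one elementary submodel containing the required parameters. The second union, by the inductive hypothesis applied to the Davies-sequence of $P_\xi$, is a finite union of elementary submodels each containing $\{\mc X\}\cup \mc A$. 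Together this is the required finite union.

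The main subtlety I expect is verifying that $m_\beta$ stays finite through arbitrarily deep nesting. The branch of the recursion reaching any fixed $\beta$ corresponds to a strictly descending sequence of cardinals $\lambda > |P_{\xi_0}| > |P^{\xi_0}_{\xi_1}| > \cdots > \oo$, and since cardinals admit no infinite descending chains, this branch has finite depth; at each level one picks up at most one ``completed'' elementary submodel (thanks to continuity of the chain) plus a finite ``in-progress'' contribution from the inductive hypothesis, so $m_\beta$ is a finite sum of finite numbers.
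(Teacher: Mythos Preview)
Your approach is essentially the paper's: the paper builds the tree of submodels explicitly and then orders the leaves lexicographically, while you package the same construction as a transfinite induction on $|\mc X|$; unwinding your recursion yields exactly that tree, and your ``one completed model per level'' is precisely the paper's $N_{b,i}$.

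There is one detail to tighten. Your displayed equality
\[
\bigcup_{\beta'<\beta} M_{\beta'} \;=\; \bigcup_{\xi'<\xi} P_{\xi'} \;\cup\; \bigcup_{\alpha'<\alpha} M^\xi_{\alpha'}
\]
requires $\bigcup_{\alpha} M^{\xi'}_\alpha = P_{\xi'}$, not merely $\supseteq$, and the inductive hypothesis as you stated it (a Davies-sequence \emph{covering} $P_{\xi'}$) only guarantees the latter. The fix is immediate: since $P_\xi \prec H(\Theta)$, run the inductive construction \emph{inside} $P_\xi$ --- countable $M\prec P_\xi$ are automatically elementary in $H(\Theta)$ --- so that each $M^\xi_\alpha \subs P_\xi$ and the union is exactly $P_\xi$; equivalently, strengthen the inductive hypothesis so that the union of the Davies-sequence equals a prescribed elementary submodel. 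The paper makes the analogous implicit assumption when it asserts $M(a)\subs N_{b,i}$ for a leaf $a$.
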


We will refer to a sequence of models $\mc M$ with property (3) as a \emph{Davies-tree}. 

Note that if the sequence $(M_\alpha)_{\alpha<\kappa}$ is increasing then $\bigcup\{M_\alpha:\alpha<\beta\}$ is also an elementary submodel of $H(\Theta)$ for each $\beta<\kappa$; however, there is no way to cover a set of size bigger than $\omg$ with an increasing chain of \emph{countable} sets. Fact \ref{main} says that we can cover by countable elementary submodels and almost maintain the property that the initial segments $\bigcup\{M_\alpha:\alpha<\beta\}$ are submodels. Indeed, each initial segment is the union of \emph{finitely many} submodels by condition (3) while these models contain everything relevant (denoted by $\mc A$ above) as well.

\begin{proof}
Suppose that $\mc X$ has size $\lambda$. We recursively construct a tree $T$ of finite sequences of ordinals and elementary submodels $M(a)$ for $a\in T$. Let $\emptyset\in T$ and let $M(\emptyset)$ be an elementary submodel of size $\lambda$ so that  
\begin{itemize}
	\item $\{\mc X\}\cup \mc A\subs M(\emptyset)$,
	\item ${\mc X\subs M(\emptyset)}$.
\end{itemize}
Suppose that we defined a tree $T'$ and corresponding models $M(a)$ for $a\in T'$. Fix $a\in T'$ and suppose that $M(a)$ is \emph{uncountable}. Find a continuous sequence of elementary submodels $(M(a^\frown\xi))_{\xi<\zeta}$ so that
\begin{itemize}
	\item $\{\mc X\}\cup \mc A\subs M(a^\frown\xi)$ for all $\xi<\zeta$,
	\item $M(a^\frown\xi)$ has size less than $M(a)$.
\end{itemize}
 We extend $T'$ with $\{a^\frown \xi:\xi<\zeta\}$ and iterate this procedure to get $T$.


\vspace{0.4 cm}
\hspace{1 cm}
\begin{tikzpicture}[]

\draw (0,0)   node  (me)  {$M(\emptyset)$};

\draw (-3,1)   node   (m0) {$M(0)$};
\draw (-1.5,1)   node (m1)   {$M(1)$};
\draw (0,1)   node    {$\dots$};
\draw (2,1)   node (ma)   {$M(\alpha)$};
\draw (4,1)   node    {$\dots$};
\draw (6,1)   node (mb)   {$M(\beta)$};
\draw (8,1)   node    {$\dots$};

\draw  (me) -- (m0);
\draw  (me) -- (m1);
\draw  (me) -- (ma);
\draw  (me) -- (mb);

\draw (0,2)   node   (ma0) {$M({\alpha}^\frown 0)$};
\draw (2,2)   node (ma1)   {$M({\alpha}^\frown 1)$};
\draw (3,2)   node    {$\dots$};
\draw (4.5,2)   node (maa)   {$M({\alpha}^\frown\gamma)$};
\draw (5.5,2)   node    {$\dots$};

\draw  (ma) -- (ma0);
\draw  (ma) -- (ma1);
\draw  (ma) -- (maa);

\end{tikzpicture}


\vspace{0.2 cm}

It is easy to see that this process produces a downwards closed subtree $T$ of $Ord^{<\oo}$ and if $a\in T$ is a branch then $M(a)$ is countable. Let us well order $\{M(a): a\in T$ is a branch$\}$ by the lexicographical ordering. 

We wish to show that if $b\in T$ is a branch then $\bigcup\{M(a):a<_{lex}b, a\in T$ is a branch$\}$ is the union of finitely many submodels containing $\{\mc X\}\cup \mc A$. Suppose that $|b|=n\in \mbb N$ and write $$N_{b,i}=\bigcup\{M((b\uhp {i-1})^\frown \xi):\xi<b(i-1)\}$$ for $i=1\dots n$. It is clear that $N_{b,i}$ is an elementary submodel as a union of an increasing chain. Also, if $a<_{lex}b$ then $M(a)\subs N_{b,i}$ must hold where $i=\min\{j\leq n: a(j)\neq b(j)\}$.

\end{proof}

Note that this proof shows that if $\mc X$ has size $\aleph_n$ then every initial segment in the lexicographical ordering is the union of $n$ elementary submodels (the tree $T$ has height $n$).

In the future, when working with a sequence of elementary submodels $\mc M=(M_\alpha)_{\alpha<\kappa}$, we use the notation $$\mc M_{<\beta}=\bigcup\{M_\alpha:\alpha<\beta\}$$ for $\beta<\kappa$.

\section{The first applications}\label{first}

\subsection{The very first} As we mentioned already, the above constructed tree of models is originated in the work of Roy O. Davies \cite{davies} from the early 60's. He proves that the plane $\mbb R^2$ can be covered by countably many rotated graphs of functions; this was known to be true under the Continuum Hypothesis (proved by Sierpinski in the 30's) while Davies' result holds regardless of cardinal arithmetic. 

The importance of the tree construction is that we can cover arbitrary large structures (in this case $\mbb R^2$) with countable sets in a way that initial segments are fairly close to models (unions of finitely many models). This way the assumption of CH can be eliminated from Sierpinski's original result.

\subsection{The Steinhaus tiling problem} Probably the most important application of Davies-trees is S. Jackson and R. D. Mauldin's solution from 2002 to the Steinhaus tiling problem. In the late 50's H. Steinhaus asked if there is a subset $S$ of $\mbb R^2$ such that every rotation of $S$ tiles the plane or equivalently, $S$ intersects every isometric copy of the lattice $\mbb Z\times \mbb Z$ in exactly one point. Jackson and Mauldin provides an affirmative answer (surveyed in \cite{jackson}); their proof elegantly combines hard combinatorial, geometrical and set theoretical methods (a transfinite induction using Davies-trees).

Again, their proof becomes somewhat simpler assuming CH. However, this assumption can be eliminated, as before, if one uses Davies-trees as a substitute for increasing chains of models.

\subsection{Topology} In 2008, D. Milovich published a paper \cite{milovich} in set theoretic topology (order theory of bases) where he further polished Jackson and Mauldin's Davies-tree decomposition technique. In particular, one can guarantee that the Davies-tree $(M_\alpha)_{\alpha<\kappa}$ has the additional property that $\{N_{\alpha,i}:i<m_\alpha\}\in M_{\alpha}$ for all $\alpha<\kappa$. This extra hypothesis is very useful in several situations.\\

It is likely that there are other papers, even earlier then Davies', where similar techniques appear either explicitly or implicitly however at the point of writing this note we are not aware of further references.

\section{Degrees of disjointness}

We start by proving a simple fact from the theory of almost disjoint set systems.

\begin{dfn} We say that a family of sets $\mc X$ is \emph{$n$-almost disjoint} for some $n\in \mbb N$ iff $|A\cap B|<n$ for every $A\neq B\in \mc X$. $\mc X$ is \emph{essentially disjoint} iff we can select finite $F_A\subs A$ for each $A\in \mc A$ so that $\{A\setm F_A:A\in \mc A\}$ is disjoint.
\end{dfn}

\begin{theorem}[\cite{kopefam}] Every $n$-almost disjoint family $\mc X$ of countable sets is essentially disjoint for every $n\in \mbb N$.
\end{theorem}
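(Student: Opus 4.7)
The plan is to define the finite sets $F_A$ by transfinite recursion along an ordering induced by a Davies-tree, and to reduce finiteness of each $F_A$ to a single reflection lemma powered by $n$-almost disjointness.

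First I fix a Davies-tree $(M_\alpha)_{\alpha<\kappa}$ as in Fact \ref{main} with $\{\mc X,n\}\subseteq M_\alpha$ and $\mc M_{<\alpha}=\bigcup_{i<m_\alpha}N_{\alpha,i}$. For $A\in\mc X$ set $\alpha(A):=\min\{\alpha:A\in M_\alpha\}$, and choose a well-order $\prec_p$ of $\mc X$ refining $\alpha(\cdot)$, with ties in each countable set $\mc X\cap M_\alpha$ broken by a fixed $\omega$-enumeration. I then define
\[F_A:=A\cap\bigcup\{B\setm F_B:B\prec_p A\}\]
by $\prec_p$-recursion. The construction guarantees $(A\setm F_A)\cap(B\setm F_B)=\emptyset$ for $B\prec_p A$, so $\{A\setm F_A:A\in\mc X\}$ is automatically pairwise disjoint, and it only remains to prove each $F_A$ is finite.

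The main obstacle, and the heart of the proof, is the following reflection lemma: for any elementary submodel $N\prec H(\Theta)$ with $\{\mc X,n\}\subseteq N$ and any $A\in\mc X\setm N$, one has $|A\cap N|<n$. A priori $A\cap N$ could well be infinite, so finiteness has to come from the arithmetic of intersection sizes. For $x\in A\cap N$ the family $\mc X_x=\{B\in\mc X:x\in B\}$ is definable from $x$ and $\mc X\in N$ and hence lies in $N$; if it were countable, Fact \ref{elfact} would place $A\in\mc X_x\subseteq N$ against $A\notin N$, so $\mc X_x$ is uncountable. Now if $x_0,\ldots,x_{n-1}$ were $n$ distinct points of $A\cap N$, then any $B\in\mc X$ containing $\{x_0,\ldots,x_{n-1}\}$ would satisfy $|A\cap B|\geq n$ and hence equal $A$ by $n$-almost disjointness; so $A$ is the unique element of $\mc X$ with $\{x_0,\ldots,x_{n-1}\}\subseteq A$, definable in $N$ with parameters in $N$, and elementarity forces $A\in N$, a contradiction.

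Finally I assemble the pieces by splitting $F_A$ into a cross-stage part (from $B$ with $\alpha(B)<\alpha(A)$) and a same-stage part. Each cross-stage $B$ is countable and contained in some $N_{\alpha(A),i}$ by Fact \ref{elfact}, so $B\setm F_B\subseteq N_{\alpha(A),i}$ and the cross-stage part of $F_A$ lies in $\bigcup_{i<m_{\alpha(A)}}(A\cap N_{\alpha(A),i})$; since $A\notin\mc M_{<\alpha(A)}$ the reflection lemma bounds each $|A\cap N_{\alpha(A),i}|<n$, giving a finite total of at most $m_{\alpha(A)}(n-1)$. For the same-stage part, $\mc X\cap M_{\alpha(A)}$ is countable and $A$ occupies a finite rank in its $\omega$-enumeration, so only finitely many $B$ precede $A$ at stage $\alpha(A)$, each contributing at most $n-1$ elements. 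Thus $F_A$ is finite.
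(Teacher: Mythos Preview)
Your proof is correct and follows essentially the same approach as the paper: a Davies-tree to stratify $\mc X$, an $\omega$-enumeration within each stage, and the key elementarity argument that $n$ points of $A$ inside a model $N\ni\mc X$ force $A\in N$ by uniqueness from $n$-almost disjointness. Your reflection lemma is in fact slightly sharper than the paper's (you get $|A\cap N|<n$ rather than merely $|A\cap\bigcup\mc X_{<\alpha}|<\omega$), and the digression about $\mc X_x$ being uncountable is correct but not needed for the argument.
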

\begin{proof} Take a Davies-tree $\mc M= \{M_\alpha:\alpha<\kappa\}$ such that $\mc X\subs \bigcup \mc M$ and that $\mc X\in M_\alpha$ for each $\alpha< \kappa$. Recall that $\bigcup \mc M_{<\alpha}=\bigcup\{N_{\alpha,i}:i<m_{\alpha}\}$ for each $\alpha<\kappa$. We define a map $F$ on $\mc X$ such that $F(A)\in [A]^{<\omega}$ for each $A\in \mc X$ and $\{A\setm F(A):A\in \mc X\}$ is pairwise disjoint. 

Let $\mc X_\alpha=(\mc X \cap M_\alpha)\setm \bigcup \mc M_{<\alpha}$ and $\mc X_{<\alpha}=\mc X \cap (\bigcup \mc M_{<\alpha})$. We define $F$ on each $\mc X_\alpha$ independently so fix $\alpha<\kappa$. 

\begin{obs} $|A\cap (\bigcup \mc X_{<\alpha})|<\omega$ for all $A\in \mc X_\alpha$.
\end{obs}
\begin{proof} Otherwise, there is $i<m_\alpha$ so that $A\cap \bigcup (\mc X\cap N_{\alpha,i})$ is infinite and in particular, we can select $a\in [A\cap \bigcup (\mc X\cap N_{\alpha,i})]^n$. Note that $\bigcup (\mc X\cap N_{\alpha,i})\subs N_{\alpha,i}$ as each set in $\mc X$ is countable hence $a\subs N_{\alpha,i}$ and $a\in N_{\alpha,i}$. However, $N_{\alpha,i}\models$ "there is a \emph{unique} element of $\mc X$ containing $a$" (by $n$-almost disjointness) hence $A\in N_{\alpha,i}\subs \bigcup \mc M_{<\alpha}$ (by elementarity) which contradicts $A\in \mc X_\alpha$.
\end{proof}

Now list $\mc X_\alpha$ as $\{A_{\alpha,l}:l\in \omega\}$. Let $$F(A_{\alpha,l})=A_{\alpha,l}\cap \bigl(\bigcup \mc X_{<\alpha}\cup \bigcup \{A_{\alpha,k}:k<l\}\bigr)$$ for $l<\omega$. Clearly, $F$ witnesses that $\mc X$ is essentially disjoint.

\end{proof}

\section{Clouds above the Continuum Hypothesis}

The next theorem we prove has a certain similarity to Davies' result. The reason that this proof is of greater interest is that it highlights the fact that a set of size $\aleph_n$ can be covered by a Davies-tree such that the initial segments are expressed as the union of $n$ elementary submodels (for $n\in \mbb N$). The same fact is utilized in an application presented in \cite{sigmaD}.  

\begin{dfn}We say that $A\subs \mbb R^2$ is a \emph{cloud around a point $a\in \mbb R^2$} iff every line $l$ through $a$ intersects $A$ in a finite set.
\end{dfn}

Note that one or two clouds cannot cover the plane; indeed, if $A_i$ is a cloud around $a_i$ for $i<2$ then the line $l$ through $a_0$ and $a_1$ intersects $A_0\cup A_1$ in a finite set. How about three or more clouds?

\begin{theorem}[\cite{clouds} and \cite{schmerl}] The following are equivalent for each $n\in \mbb N$:
\begin{enumerate}
	\item $2^\omega\leq \aleph_n$,
	\item $\mbb R^2$ is covered by at most $n+2$ clouds.
\end{enumerate}
\end{theorem}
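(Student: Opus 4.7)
The direction $(2)\Rightarrow(1)$ is due to Schmerl and uses a counting argument of a different flavour, so I focus on $(1)\Rightarrow(2)$, which is the natural Davies-tree application foreshadowed in the preamble. The plan is to mimic the previous section, exploiting the feature that a Davies-tree covering a set of size $\aleph_n$ has every initial segment equal to the union of at most $n$ submodels.

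Fix $n+2$ points $a_0,\ldots,a_{n+1}\in \mbb R^2$ in sufficiently general position, and take a Davies-tree $\mc M=(M_\alpha)_{\alpha<\kappa}$ covering $\mbb R^2$ with $\{a_0,\ldots,a_{n+1}\}\subseteq \mc A$. By the remark after Fact~\ref{main}, every $\bigcup \mc M_{<\alpha}$ is the union of submodels $N_{\alpha,0},\ldots,N_{\alpha,m_\alpha-1}$ with $m_\alpha\leq n$. Write $\alpha(x)$ for the unique $\alpha$ with $x\in \mc X_\alpha=(\mbb R^2\cap M_\alpha)\setm \bigcup \mc M_{<\alpha}$, and $\ell_j(x)$ for the line through $x$ and $a_j$. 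The central pigeonhole I will verify is that
$$S(x):=\{j\leq n+1:\ell_j(x)\notin N_{\alpha(x),i}\text{ for all }i<m_{\alpha(x)}\}$$
satisfies $|S(x)|\geq 2$. Indeed, if two distinct lines $\ell_j(x),\ell_{j'}(x)$ both lay in a common $N_{\alpha(x),i}$, their intersection $\{x\}$ would too, contradicting $x\in \mc X_{\alpha(x)}$; so each submodel removes at most one index from $S(x)$, and with $m_{\alpha(x)}\leq n$ versus $n+2$ candidates at least two indices survive. The degenerate $x$'s lying on one of the finitely many lines $\overline{a_j a_{j'}}$ require a small separate treatment but cause no trouble, since any such $x$ can only hit a given line through a different $a_{j''}$ in one point.

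With $j(x)\in S(x)$ chosen and $A_j:=\{x:j(x)=j\}$, the verification that $A_j\cap \ell$ is finite for each line $\ell$ through $a_j$ splits in two. The cross-stage half is almost automatic: if $x,y\in A_j\cap \ell$ with $\alpha(x)<\alpha(y)$, then $\ell=\ell_j(x)$ is definable from $x,a_j\in M_{\alpha(x)}$, whence $\ell\in M_{\alpha(x)}\subseteq \bigcup_i N_{\alpha(y),i}$, contradicting $j\in S(y)$; so $A_j\cap\ell$ lives inside a single $\mc X_\alpha$. The hard part will be the within-stage bookkeeping: ensuring that for each line $\ell$ through $a_j$, only finitely many $x\in \mc X_\alpha\cap \ell$ end up in cloud $j$. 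Since $\mc X_\alpha\subseteq M_\alpha$ is countable and $|S(x)|\geq 2$ leaves room at almost every $x$, I plan to enumerate $\mc X_\alpha=\{x_k:k<\oo\}$ and assign $j(x_k)\in S(x_k)$ by a greedy/diagonal rule that charges each of the countably many ``troublesome'' lines inside $M_\alpha$ only finitely often, using the two-way freedom in $S(x_k)$ as slack; this combinatorial step is where I expect the real technical work to lie.
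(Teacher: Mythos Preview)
Your setup, pigeonhole, and cross-stage argument are all correct and match the paper's core idea, but the paper organizes the construction dually in a way that dissolves the within-stage bookkeeping you flag as ``the real technical work''. Rather than assigning each point a cloud index $j(x)$ and then verifying line-finiteness, the paper assigns to each line $l$ through some $a_k$ a finite set $F(l)\in[l]^{<\omega}$ and declares $A_k=\{a_k\}\cup\bigcup\{F(l):l\in\mc L^k\}$; the cloud property is then automatic and only \emph{coverage} needs checking. Concretely, in stage $\alpha$ one enumerates the \emph{new lines} $\mc L_\alpha\setminus\mc L'=\{l_{\alpha,j}:j<\omega\}$ (where $\mc L'$ is the finite set of $\binom{n+2}{2}$ lines determined by pairs of the $a_k$) and sets
\[
F(l_{\alpha,j})=\bigcup\bigl\{l\cap l_{\alpha,j}:l\in\mc L'\cup\{l_{\alpha,j'}:j'<j\}\bigr\}.
\]
Your pigeonhole observation (which the paper states identically) says that every $x$ in stage $\alpha$ lies on at least two new lines $l_{\alpha,j'},l_{\alpha,j}$ with $j'<j$, whence $x\in F(l_{\alpha,j})$. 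So the greedy/diagonal assignment you anticipate needing reduces to: enumerate \emph{lines} rather than \emph{points}, and put each $x$ into the cloud of whichever of its new lines appears latest. With finiteness built into the definition of $F$, your cross-stage argument also becomes unnecessary.
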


We only prove (1) implies (2) and follow Komj\'ath's original proof for the $2^\oo=\omg$ case.

\begin{proof} Fix $n\in \omega$ and suppose that the continuum is $\aleph_n$. This implies that $\mbb R^2$ can be covered by a Davies-tree $\{M_\alpha:\alpha<\kappa\}$ so that $\bigcup \mc M_{<\alpha}=\bigcup\{N_{\alpha,i}:i<n\}$ for every $\alpha<\kappa$. 

Fix $n+2$ points $\{a_k:k<n+2\}$ in $\mbb R^2$ in general position (i.e. no three are collinear). Let $\mc L^k$ denote the set of lines through $a_k$ and let $\mc L=\bigcup\{\mc L^k:k<n+2\}$. We will define clouds $A_k$ around $a_k$ by defining a map $F:\mc L \to [\mbb R^2]^{<\oo}$ such that $F(l)\in [l]^{<\oo}$ and letting $$A_k=\{a_k\}\cup \bigcup\{F(l):l\in \mc L^k\}$$ for $k<n+2$. We have to make sure that for every $x\in \mbb R^2$ there is $l\in \mc L$ so that $x\in F(l)$.

Now let $\mc L_\alpha=(\mc L \cap M_\alpha)\setm \bigcup \mc M_{<\alpha}$ and $\mc L_{<\alpha}=\mc L\cap \bigcup \mc M_{<\alpha}$ for $\alpha<\kappa$. We define $F$ on $L_\alpha$ for each $\alpha<\kappa$ independently.

Fix an $\alpha<\kappa$ and list $\mc L_\alpha\setm \mc L'$ as $\{l_{\alpha,j}:j<\oo\}$ where $\mc L'$ is the set of $\binom{n+2}{2}$ lines determined $\{a_k:k<n+2\}$. We let $$F(l_{\alpha,j})=\bigcup\{l\cap l_{\alpha,j}:l\in \mc L'\cup \{l_{\alpha,j'}:j'<j\} \}$$ for $j<\oo$.

We claim that this definition works: fix a point $x\in \mbb R^2$ and we will show that there is $l\in \mc L$ with $x\in F(l)$. Find the unique $\alpha<\kappa$ such that $x\in M_\alpha\setm \bigcup \mc M_{<\alpha}$. It is easy to see that $\cup \mc L'$ is covered by our clouds hence we suppose $x\notin \bigcup \mc L'$. Let $l_k$ denote the line through $x$ and $a_k$. 

\begin{obs} $|\bigcup \mc M_{<\alpha}\cap \{l_k:k<n+2\}|\leq n$.
\end{obs}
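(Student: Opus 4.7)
The plan is a short pigeonhole argument coupled with one application of elementarity. Suppose toward contradiction that at least $n+1$ of the lines $\{l_k:k<n+2\}$ belong to $\bigcup \mc M_{<\alpha}$. Since $\bigcup \mc M_{<\alpha}=\bigcup\{N_{\alpha,i}:i<n\}$ is a union of $n$ elementary submodels, pigeonhole produces distinct indices $k\neq k'<n+2$ and some $i<n$ with $l_k,l_{k'}\in N_{\alpha,i}$.

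Next I would verify that these two lines are actually distinct. Because $x\notin \bigcup \mc L'$, the point $x$ does not lie on the line determined by $a_k$ and $a_{k'}$; equivalently, $a_k$, $a_{k'}$ and $x$ are not collinear. Therefore $l_k$ (the line through $x$ and $a_k$) and $l_{k'}$ (the line through $x$ and $a_{k'}$) are genuinely different lines. Two distinct lines meet in at most one point, and since both pass through $x$, their intersection is exactly $\{x\}$.

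Finally, I invoke elementarity inside $N_{\alpha,i}$. Since $l_k, l_{k'}\in N_{\alpha,i}$, the formula ``there is a unique point lying on both $l_k$ and $l_{k'}$'' has parameters in $N_{\alpha,i}$ and is true in $H(\Theta)$; by elementarity that unique witness lies in $N_{\alpha,i}$ as well. Thus $x\in N_{\alpha,i}\subs \bigcup \mc M_{<\alpha}$, contradicting the choice of $\alpha$ as the unique index with $x\in M_\alpha\setm \bigcup \mc M_{<\alpha}$.

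The only mild subtlety — and I would not call it a real obstacle — is guaranteeing that the two selected lines are distinct before invoking elementarity, which is precisely the reason the author reduced at the previous step to the case $x\notin \bigcup \mc L'$. With that reduction in hand, the bound $n$ (rather than $n+1$) follows directly from the fact that the Davies-tree has height $n$, i.e.\ initial segments are covered by exactly $n$ submodels.
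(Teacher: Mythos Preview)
Your proposal is correct and follows essentially the same argument as the paper: pigeonhole places two of the lines $l_k$ into a single $N_{\alpha,i}$, their unique intersection is $x$, and elementarity forces $x\in N_{\alpha,i}\subs\bigcup\mc M_{<\alpha}$, contradicting the choice of $\alpha$. The paper is terser and does not pause to justify that $l_k\neq l_{k'}$, whereas you correctly spell out that this uses $x\notin\bigcup\mc L'$.
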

\begin{proof} Suppose that this is not true. Then (by the pigeon hole principle) there is $i<n$ such that $|N_{\alpha,i}\cap \{l_k:k<n+2\}|\geq 2$ and in particular the intersection of any two of these lines, the point $x$, is in $N_{\alpha,i}\subs \bigcup \mc M_{<\alpha}$. This contradicts the choice of $\alpha$.
\end{proof}

We have now that $$|\{l_k:k<n+2\}\cap (\mc L_\alpha\setm \mc L')|\geq 2$$ i.e. there is $j'<j<\oo$ such that $l_{\alpha,j'},l_{\alpha,j}\in \{l_k:k<n+2\}$. Hence $x\in F(l_{\alpha,j})$ is covered by one of the clouds.

\end{proof}

\section{The chromatic number and connectivity}

\begin{dfn} The \emph{chromatic number} of a graph $G$ is the least number $\kappa$ such that $G$ is covered by $\kappa$ many independent sets.
\end{dfn}

It is one of the fundamental problems of graph theory how the chromatic number affects the subgraph structure of a graph i.e. is it true that large chromatic number implies the existence of certain obligatory subgraphs? The first result in this area is most likely Mycielski's construction of triangle free graphs of arbitrary large finite chromatic number \cite{myc}. 

It was discovered quite early that a lot can be said about uncountably chromatic graphs; this line of research was initiated by P. Erd\H os and A. Hajnal in \cite{EH0}. One of many problems in that paper asked whether uncountable chromatic number implies the existence of \emph{highly connected} uncountably chromatic subgraphs.

\begin{dfn} A graph $G$ is $n$-connected iff the removal of less than $n$ vertices leaves $G$ connected.
\end{dfn}

Our aim is to prove P. Komj\'ath's following result from \cite{kopeconn}:

\begin{theorem} \label{nconn} Every uncountably chromatic graph $G$ contains $n$-connected uncountably chromatic subgraphs for every $n\in \mbb N$.
\end{theorem}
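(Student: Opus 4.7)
The plan is to proceed by induction on $n$. The case $n=1$ is immediate: $\chi(G)$ equals the supremum of chromatic numbers of the connected components of $G$, so some connected component of $G$ is itself uncountably chromatic and, by definition, $1$-connected.

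For the inductive step, I assume the result for $n \geq 1$, let $G$ be uncountably chromatic, and use the induction hypothesis to pass to an $n$-connected uncountably chromatic subgraph — still called $G$. Now suppose, toward a contradiction, that $G$ contains no $(n+1)$-connected uncountably chromatic subgraph. Fix a Davies-tree $\mc M=(M_\alpha)_{\alpha<\kappa}$ with $G\in M_\alpha$ for every $\alpha$ and $V(G)\subseteq \bigcup \mc M$, and write $\mc M_{<\alpha} = \bigcup\{N_{\alpha,i}:i<m_\alpha\}$ as in Fact \ref{main}. For $v\in V(G)$ let $\rho(v)$ denote the unique $\alpha$ with $v\in V_\alpha := V(G)\cap M_\alpha\setm \mc M_{<\alpha}$, and set $N^-(v) = N(v)\cap \mc M_{<\rho(v)}$.

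The core technical target is to show $|N^-(v)|<\omega$ for every $v$: given this, a greedy transfinite coloring along any enumeration extending $\rho$ produces a proper $\omega$-coloring of $G$, contradicting $\chi(G)>\omega$. Suppose instead some $v\in V_\alpha$ has $|N^-(v)\cap N_{\alpha,i}|\geq \omega$ for some $i<m_\alpha$. I pick $n+1$ backward neighbors $a_0,\ldots,a_n$ of $v$ in $N_{\alpha,i}$, set $A=\{a_0,\ldots,a_n\}$, and consider the common neighborhood $W=\bigcap_{k\leq n}N(a_k)$. As $A\in N_{\alpha,i}$ and $G\in N_{\alpha,i}$ we have $W\in N_{\alpha,i}$; since $v\in W\setm N_{\alpha,i}$, Fact \ref{elfact} forces $|W|>\omega$. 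If in addition $\chi(G[W])>\omega$, the induction hypothesis provides an $n$-connected uncountably chromatic $H\subseteq G[W]$, and $G[V(H)\cup A]$ is straightforwardly $(n+1)$-connected (any $S$ of size $\leq n$ must miss some $a_k\in A$, which is adjacent to every vertex of the nonempty set $V(H)\setm S$) and uncountably chromatic — the desired contradiction.

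The main obstacle is the remaining case $\chi(G[W])\leq\omega$, where the naive common-neighborhood construction stalls. My plan here is to study how $\chi(G[W_B])$ varies as $B$ ranges over finite subsets of the infinite set $N^-(v)\cap N_{\alpha,i}$: each $W_B$ is forced uncountable by Fact \ref{elfact}, so either there exists an $(n+1)$-subset $B\subseteq V(G)$ with $\chi(G[W_B])>\omega$ — in which case one concludes as above by reflecting the existential statement into $N_{\alpha,i}$ — or the uniform bound $\chi(G[W_B])\leq \omega$ over all $(n+1)$-subsets should be combined with the Davies-tree stratification to extract a direct $\omega$-coloring of $G$, again contradicting $\chi(G)>\omega$. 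Making this dichotomy function uniformly, so that the bad case really does yield a countable coloring rather than just local finiteness, is the delicate step I expect to be the hardest.
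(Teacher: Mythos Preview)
Your proposal has a real gap precisely where you flag it. Once you are in the case where every $(n{+}1)$-common-neighbourhood $W_B=\bigcap_{b\in B}N(b)$ induces a countably chromatic subgraph, your argument for $|N^-(v)|<\omega$ has nowhere to go: from $|N^-(v)\cap N_{\alpha,i}|\geq\omega$ you extract an uncountable $W_B$, but the induction hypothesis needs $\chi(G[W_B])>\omega$ to manufacture the $(n{+}1)$-connected uncountably chromatic subgraph, and ``combine the uniform bound with the Davies-tree stratification to extract a direct $\omega$-coloring'' is not a plan --- it is a restatement of the goal. There is no reason to expect that ``$\chi(G[W_B])\leq\omega$ for all $B\in[V]^{n+1}$'' forces either $|N^-(v)|<\omega$ or $\chi(G)\leq\omega$; the $n$-connectedness of $G$ gives you no purchase here, and bounding the backward degree of individual vertices is simply too strong a target. (Incidentally, in your Case~A the reflection into $N_{\alpha,i}$ is unnecessary: the existence of \emph{any} such $B$ already contradicts the standing assumption.)

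The paper sidesteps this obstacle entirely, and without induction on $n$. Fixing $n$, it lets $\mc A$ be the family of \emph{maximal} $n$-connected vertex sets, assumes each $A\in\mc A$ is countably chromatic, and runs the Davies-tree over $\mc A$ rather than over $V$. The finiteness statement one actually proves (Lemma~\ref{mlemma1}) is: if $x\notin N$ has infinitely many neighbours in $\bigcup(\mc A\cap N)$, then already $x\in A$ for some $A\in\mc A\cap N$. The argument finds $n$ distinct $A_i\in\mc A\cap N$ each meeting $N_G(x)$ outside $N$, reflects in $N$ to produce $n$ further points $x_k$ with the same pattern, and then Claim~\ref{amalg1} amalgamates $\bigcup_{i<n}A_i\cup\{x_k:k<n\}$ into a single $n$-connected set --- contradicting the \emph{maximality} of the $A_i$, not any chromatic hypothesis. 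That use of maximality is exactly the idea missing from your Case~B. The resulting well-ordering of $\mc A$ then feeds into Lemma~\ref{ordlemma} to colour $\bigcup\mc A$, and the leftover $V\setminus\bigcup\mc A$ is dispatched in one line via the $K_{n,\omg}$ contained in any uncountably chromatic graph.
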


 Fix a graph $\gr$, $n\in\omega$ and consider the set $\mc A$ of all subsets of $V$ spanning maximal $n$-connected subgraphs of $G$. We let $N_G(v)=\{w\in V: \{v,w\}\in E\}$ for $v\in V$.

We will follow Komj\'ath's framework in the sense that we are going to define a \emph{good ordering} on $\mc A$. The following lemma explains what we mean by good ordering.

\begin{lemma}\label{ordlemma}
 Suppose that $\gr$ is a graph, $\{A_\xi:\xi<\mu\}$ is a cover of $V$ with countably chromatic subsets so that $|N_G(x)\cap \bigcup A_{<\xi}|<\oo$ for all $\xi<\mu$ and $x\in A_\xi\setm \bigcup A_{<\xi}$ where $A_{<\xi}=\{A_\zeta:\zeta<\xi\}$. Then $Chr(G)\leq \oo$.

\end{lemma}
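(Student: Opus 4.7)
The plan is to construct a proper coloring $c \colon V \to \oo \times \oo$ of $G$ by combining the per-level colorings with a greedy second coordinate that handles cross-level edges. For each $\xi<\mu$ fix a proper $\oo$-coloring $c_\xi \colon A_\xi \to \oo$, which exists because $A_\xi$ is countably chromatic. For each $v \in V$ let $\xi(v)$ denote the least $\xi<\mu$ with $v \in A_\xi$, so the sets $B_\xi := A_\xi \setm \bigcup A_{<\xi}$ partition $V$.

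I will define a second coordinate $f \colon V \to \oo$ by recursion on $\xi(v)$, and then set $c(v) = (c_{\xi(v)}(v),\, f(v))$. For $v$ with $\xi(v) = \xi$, having defined $f$ on $\bigcup A_{<\xi}$, set
$$f(v) = \min\bigl\{ n \in \oo : n \neq f(w) \text{ for every } w \in N_G(v) \cap \textstyle\bigcup A_{<\xi} \text{ with } c_{\xi(w)}(w) = c_\xi(v) \bigr\}.$$
The hypothesis that $N_G(v) \cap \bigcup A_{<\xi}$ is finite makes this minimum well defined.

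To verify that $c$ is proper, consider any edge $\{v,w\} \in E$ with $\xi(v) \leq \xi(w)$, and write $\xi = \xi(w)$. If $\xi(v) = \xi$, then $v,w \in A_\xi$ are adjacent, so $c_\xi(v) \neq c_\xi(w)$ and the first coordinates of $c(v)$ and $c(w)$ already differ. If $\xi(v) < \xi$, then $v$ lies in $N_G(w) \cap \bigcup A_{<\xi}$, i.e.\ among the finite back-neighbors of $w$, and the definition of $f(w)$ ensures that whenever $c_{\xi(v)}(v) = c_\xi(w)$ one also has $f(v) \neq f(w)$. Either way $c(v) \neq c(w)$, and since $\oo \times \oo$ is countable this yields $Chr(G) \leq \oo$.

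The main conceptual point is the product-coloring split: the first coordinate uses each $c_\xi$ to absorb every intra-level edge at once, while the second coordinate is built greedily, leaning only on the finite back-neighborhood condition. No Davies-tree machinery enters the proof of this lemma; Davies-trees will appear only in deducing that a cover $\{A_\xi\}$ with the required finite back-degree property actually exists when the $A_\xi$ are taken to be the maximal $n$-connected pieces in the proof of Theorem \ref{nconn}.
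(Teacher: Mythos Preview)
Your proof is correct and follows essentially the same approach as the paper: both build a product coloring $V\to\oo\times\oo$ whose first coordinate comes from the fixed proper colorings $c_\xi$ on each level and whose second coordinate is chosen greedily to avoid the finitely many back-neighbors in $\bigcup A_{<\xi}$. The only cosmetic difference is that the paper's second coordinate avoids \emph{all} second coordinates of back-neighbors, whereas you avoid only those whose first coordinate already matches; this is a harmless refinement.
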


\begin{proof} Suppose that $g_\xi:A_\xi\to \oo$ witnesses that the chromatic number of $A_\xi$ is $\leq \oo$. We define $f:V\to \oo\times \oo$ by defining $f\uhp (A_\xi\setm \bigcup A_{<\xi} )$ by induction on $\xi<\mu$. If $x\in A_\xi\setm \bigcup A_{<\xi}$ then the first coordinate of $f(x)$ is $g_\xi(x)$ while the second coordinate of $f(x)$ avoids all the finitely many second coordinates appearing in $\{f(y):y\in N_G(x)\cap \bigcup A_{<\xi}\}$. It is easy to see that $f$ witnesses that $G$ has countable chromatic number.
\end{proof}


Let us continue with some straightforward observations about the maximal $n$-connected sets:

\begin{obs}\label{inobs}
\begin{enumerate}
\item $A\not\subseteq A'$ for all $A\neq A'\in \mc A$,
\item $|A\cap A'|<n$ for all $A\neq A'\in \mc A$,
\item $|\{A\in \mc A: a\subs A\}|\leq 1$ for all $a\in [V]^{\geq n}$,
\item $|N_G(x)\cap A|<n$ for all $x\in V\setm A$ and $A\in \mc A$.
\end{enumerate}
\end{obs}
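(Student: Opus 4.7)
The plan is to derive all four parts from the maximality of the elements of $\mc A$ combined with two elementary gluing lemmas for $n$-connectedness. Part (1) is immediate: if $A \subsetneq A'$ were both in $\mc A$, then $A$ would fail to be maximal.

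For part (2), I would invoke the standard fact that if $B_1, B_2 \subseteq V$ span $n$-connected subgraphs of $G$ and $|B_1 \cap B_2| \geq n$, then $B_1 \cup B_2$ also spans an $n$-connected subgraph: removing any $S$ with $|S| < n$ leaves each $B_i \setminus S$ connected (by $n$-connectedness of $B_i$) and leaves at least one vertex of $B_1 \cap B_2$ intact (since $|B_1 \cap B_2| > |S|$), so the two connected pieces share a vertex and their union $(B_1 \cup B_2)\setminus S$ is connected. Applied to distinct $A, A' \in \mc A$ with $|A \cap A'| \geq n$, this would make $A \cup A'$ an $n$-connected set strictly extending both, contradicting maximality. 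Part (3) is then an immediate corollary: any $a$ with $|a| \geq n$ contained in two members of $\mc A$ forces them to intersect in $\geq n$ vertices and hence to coincide by (2).

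For part (4), I would run the analogous single-vertex gluing: if $x \notin A$ had $|N_G(x) \cap A| \geq n$, then $A \cup \{x\}$ would still span an $n$-connected subgraph. Indeed, for any $S \subseteq A \cup \{x\}$ with $|S| < n$, either $x \in S$ and the remainder is $A \setminus (S \setminus \{x\})$ (still connected by $n$-connectedness of $A$), or $x \notin S$ and $x$ retains a neighbor inside the connected $A \setminus S$ because $|N_G(x) \cap A| \geq n > |S|$. Either way $(A \cup \{x\}) \setminus S$ is connected, contradicting maximality of $A$.

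I do not expect a serious obstacle; the whole observation reduces to maximality plus routine connectivity gluing. The only mild care-point is verifying in (2) and (4) that a vertex set of size less than $n$ cannot exhaust the relevant ``bridge'' vertices (the $\geq n$ common points in (2), respectively the $\geq n$ neighbors of $x$ in (4)), which is exactly what the hypotheses provide.
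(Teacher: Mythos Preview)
Your argument is correct and is precisely the routine maximality-plus-gluing reasoning the paper has in mind; the paper itself offers no proof here, merely labeling these as ``straightforward observations'' about maximal $n$-connected sets, so your write-up simply makes explicit what the author left to the reader.
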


The next claim is fairly simple and describes a situation when we can join $n$-connected sets.

\begin{claim}\label{amalg1} Suppose that $A_i\subs V$ spans an $n$-connected subset for each $i<n$ and we can find $Y=\{y_{i,k}:i<n, k<n\}$ and $X=\{x_k:k<n\}$ distinct points so that $$y_{i,k}\in A_i\cap N_G(x_k)$$ for all $i<n,k<n$. Then $A=\bigcup \{A_i:i<n\}\cup X$ is $n$-connected.
\end{claim}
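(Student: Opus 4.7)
The plan is to fix an arbitrary $S \subs V$ with $|S|<n$ and show that the induced subgraph on $A \setm S$ is connected. I decompose the argument into three ingredients: (a) each $A_i \setm S$ is connected; (b) each $A_i$ has at least one surviving attachment $y_{i,k} - x_k$ with $y_{i,k}, x_k \notin S$; and (c) an auxiliary bipartite graph $H$ on $\{A_i : i<n\} \cup (X \setm S)$, with $A_i \sim x_k$ iff $y_{i,k}\notin S$, is connected. Combined, these yield that any two vertices of $A \setm S$ can be joined by a path.

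Ingredient (a) is immediate from $n$-connectedness of $A_i$ together with $|S \cap A_i| \leq |S|<n$; nonemptiness uses $|A_i| \geq n$, since the $y_{i,k}$ for $k<n$ are $n$ distinct elements of $A_i$. Ingredient (b) rests on the hypothesis that the $n^2+n$ vertices of $Y \cup X$ are pairwise distinct: for fixed $i$, the $2n$ vertices $\{y_{i,k}, x_k : k<n\}$ are distinct, so $|S|<n$ cannot meet all $n$ pairs $(y_{i,k}, x_k)$; the symmetric count for fixed $k$ shows that each $x_k \in X \setm S$ has some $y_{i,k} \notin S$, and hence is non-isolated in $H$.

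The heart of the argument is (c). Suppose for contradiction that $H$ splits into two nonempty parts, having $r_\ell$ rows and $c_\ell$ columns ($\ell=1,2$). By (b) and its symmetric version, $H$ has no isolated vertex, so any part containing a row also contains a column (the row's neighbor) and vice versa; hence $r_1, r_2, c_1, c_2 \geq 1$. The missing edges between the two parts number $r_1 c_2 + r_2 c_1 \geq r_1 + r_2 = n$; each such missing edge $(A_i, x_k)$ corresponds to a distinct vertex $y_{i,k} \in S$ (and $y_{i,k} \in Y$, which is disjoint from $X$), so these are $n$ distinct elements of $S$. This forces $|S| \geq n$, contradicting $|S|<n$.

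Once $H$ is known to be connected, the conclusion is routine: every vertex of $A \setm S$ lies in some connected $A_i \setm S$ or in $X \setm S$, and a path in $H$ between any two such pieces lifts to a path in $A \setm S$ via the edges $y_{i,k} - x_k$ that it traverses. The main obstacle is (c): one must recognize that the vertex-connectivity of $A \setm S$ can be captured by connectivity of the simpler bipartite graph $H$, and then exploit $X \cap Y = \empt$ to charge missing edges of $H$ to distinct elements of $S$ without double counting.
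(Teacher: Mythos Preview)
Your argument is correct, but it takes a longer route than the paper's. The paper exploits a single pigeonhole observation: the $n$ ``columns'' $C_k=\{x_k\}\cup\{y_{i,k}:i<n\}$ are pairwise disjoint (since all $n^2+n$ points of $X\cup Y$ are distinct), so any $S$ with $|S|<n$ misses some $C_k$ entirely. That one surviving column already links all the $A_i\setminus S$ through the star at $x_k$; any other surviving $x_j$ then attaches because its $n$ neighbours $y_{0,j},\dots,y_{n-1,j}$ cannot all lie in $S$. Your auxiliary bipartite graph $H$ and the count $r_1c_2+r_2c_1\geq r_1+r_2=n$ reach the same conclusion and are perfectly sound; the approach is more flexible (it would still work under weaker distinctness assumptions, where no single column need survive intact), but in the present setting the one-line pigeonhole on columns replaces your ingredients (b) and (c) simultaneously.
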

\begin{proof}
 Let $F\in [A]^{<r}$ and note that there is a $k<n$ so that $\{y_{i,k},x_k:i<n\}\cap F=\emptyset$ for some $k<n$. Thus $\cup \{A_i:i<n\}\cup\{y_{i,k},x_k:i<n\}\setm F$ is connected as $A_i\setm F$ is connected for all $i<n$. Finally, if $x_j\in A\setm F$ then $N_G(x_j)\cap \cup\{A_i:i<n\}\setm F\neq \emptyset$ so we are done.
\end{proof}

Now, we deduce some useful facts about elementary submodels and maximal $n$-connected sets.

\begin{lemma} Suppose that $N\prec H(\Theta)$ with $G\in N$ and $$|N_G(x)\cap N|\geq n$$ for some $x\in V\setm N$. Then $x\in A$ for some $A\in\mc A\cap N$.
\end{lemma}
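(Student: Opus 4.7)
My plan is to single out $n$ neighbors of $x$ lying in $N$, control the size of their common neighborhood via Fact~\ref{elfact}, and then amalgamate those $n$ neighbors with $n$ further common neighbors into a complete bipartite $n$-connected subgraph visible to $N$. The guiding intuition is that $x$ can only escape $N$'s reach if its relationship with $n$ fixed elements of $N$ is far from unique, and I will exploit the observations listed before the lemma to pin down the maximal $n$-connected set that swallows $x$.

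I would begin by fixing distinct $y_0,\dots,y_{n-1}\in N_G(x)\cap N$, setting $Y=\{y_0,\dots,y_{n-1}\}$ (so $Y\in N$ as a finite subset of $N$), and considering $X(Y)=\bigcap_{i<n}N_G(y_i)\setminus Y$. Since $X(Y)$ is definable from $Y$ and $G$, we have $X(Y)\in N$, and clearly $x\in X(Y)$. If $|X(Y)|<n$ then $X(Y)$ is finite, so Fact~\ref{elfact} forces $X(Y)\subseteq N$, contradicting $x\notin N$. Hence $|X(Y)|\geq n$, and I pick distinct $v_0,\dots,v_{n-1}\in X(Y)$. The induced subgraph on $Y\cup\{v_0,\dots,v_{n-1}\}$ contains a copy of $K_{n,n}$ with parts $Y$ and $\{v_i:i<n\}$, and $K_{n,n}$ is $n$-connected (removing fewer than $n$ vertices leaves both sides nonempty and still complete bipartite, hence connected). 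So $Y\cup\{v_i:i<n\}$ spans an $n$-connected subgraph, and Zorn's lemma extends it to some $A\in\mc A$ with $Y\subseteq A$. By Observation~\ref{inobs}(3) this $A$ is the unique element of $\mc A$ containing the $n$-element set $Y$, so $A$ is definable from $Y$ and $G$, whence $A\in N$. Finally $|N_G(x)\cap A|\geq|Y|=n$, so the contrapositive of Observation~\ref{inobs}(4) yields $x\in A$, finishing the proof.

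The main obstacle I expect is spotting the $K_{n,n}$ amalgamation: a priori, $Y$ on its own need not span an $n$-connected subgraph (the star with hub $x$ and leaves $Y$ disconnects the moment $x$ is removed), so $Y$ might fail to lie in any $A\in\mc A$, and the naive strategy of pinning down $A$ by elementarity from $Y$ alone would collapse. The dichotomy on $|X(Y)|$ is the crucial pivot: either $X(Y)$ is so small that Fact~\ref{elfact} catches $x$ directly, or it is large enough to supply the other side of a bipartite $n$-connected witness containing $Y$.
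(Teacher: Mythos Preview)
Your proof is correct and follows essentially the same route as the paper's: both fix an $n$-element set $Y\subseteq N_G(x)\cap N$, apply Fact~\ref{elfact} to the common neighborhood $\bigcap_{y\in Y}N_G(y)\in N$ to produce a complete bipartite $n$-connected subgraph through $Y$, and then use the uniqueness of the maximal $n$-connected set containing $Y$ to place it in $N$. The only cosmetic difference is that the paper includes $x$ itself in the bipartite witness (a $K_{n,\omega_1}$) so that $x\in A$ is immediate, whereas you build a $K_{n,n}$ not containing $x$ and recover $x\in A$ at the end via Observation~\ref{inobs}(4); both are perfectly fine.
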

\begin{proof}
 Let $a\in [N_G(x)\cap N]^{n}$. There is a copy of $K_{n,\omg}$ (complete bipartite graph with classes of size $n$ and $\omg$) which contains $a\cup \{x\}$; to see this, use Fact \ref{elfact} to $X=\bigcap\{N_G(y):y\in A\}$. As $K_{n,\omg}$ is $n$-connected, there must be $A\in \mc A$ with $a\cup\{x\}\subs A$ as well. Also, there is $A'\in \mc A\cap N$ with $a\subs A'$ by elementarity; as $|A\cap A'|\geq n$ we have $A=A'$ which finishes the proof.
\end{proof}

\begin{lemma}\label{mlemma1} Suppose that $N\prec H(\Theta)$ with $G\in N$ and $$|N_G(x)\cap \bigcup(\mc A\cap N)|\geq\omega$$ for some $x\in V\setm N$. Then $x\in A$ for some $A\in\mc A\cap N $.
\end{lemma}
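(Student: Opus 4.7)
My plan is to distinguish two cases according to how concentrated $x$'s neighbors are in individual members of $\mc A\cap N$. If $|N_G(x)\cap A|\geq n$ for some $A\in\mc A\cap N$, then Observation~\ref{inobs}(4) immediately yields $x\in A$ and we are done. Otherwise $|N_G(x)\cap A|<n$ for every $A\in \mc A\cap N$, and combined with the hypothesis a pigeonhole argument produces infinitely many pairwise distinct $A_0,A_1,\ldots\in\mc A\cap N$ with pairwise distinct witnesses $y_i\in A_i\cap N_G(x)$.

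I would then further split the remaining case by looking at the boundary sets $R_A:=\{y\in A:N_G(y)\cap(V\setm A)\neq\empt\}$, each of which lies in $N$ for $A\in\mc A\cap N$. If every $R_{A_i}$ along our list is countable, then $R_{A_i}\subs N$ by Fact~\ref{elfact}; since $N_G(x)\cap A_i\subs R_{A_i}$ (because $x\notin A_i$ and $N_G(x)\cap A_i\neq\empt$), we would get $N_G(x)\cap\bigcup(\mc A\cap N)\subs N$, so $|N_G(x)\cap N|\geq\oo\geq n$ and the preceding lemma finishes the proof.

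Otherwise some $A^*$ among our $A_i$ has $R_{A^*}$ uncountable; relabel so that $A^*=A_0$ and consider
$$P:=\{z\in V\setm\bigcup_{i<n}A_i:N_G(z)\cap A_i\neq\empt\text{ for all }i<n\}.$$
This $P$ lies in $N$ and contains $x\notin N$, so is uncountable by Fact~\ref{elfact}. My aim is then to produce a configuration fitting Claim~\ref{amalg1}: pairwise distinct hubs $x_1,\ldots,x_n\in P\cap N$ together with pairwise distinct connectors $y_{i,k}\in A_i\cap N_G(x_k)$ (each such set is finite with fewer than $n$ elements by Observation~\ref{inobs}(4), and is therefore contained in $N$). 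Claim~\ref{amalg1} then yields an $n$-connected set $B:=\bigcup_{i<n}A_i\cup\{x_1,\ldots,x_n\}$; since $A_1\subs B$ but $A_1\not\subs A_0$ by Observation~\ref{inobs}(1), the containment $A_0\subs B$ is strict, contradicting the maximality of $A_0\in \mc A$.

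The main obstacle is securing the pairwise distinctness of the $n^2$ connectors $y_{i,k}$: each $A_i\cap N_G(x_k)$ has fewer than $n$ elements, so a naive choice may cause collisions for different $k$. The plan to overcome this is to exploit the richness of the uncountable boundary $R_{A_0}$ together with the uncountability of $P$ and the infinite supply of alternative $A_j$'s: by an inductive construction inside $N$, at each stage the set $\{z\in P:N_G(z)\cap A_i\not\subs F\text{ for each }i<n\}$ (where $F\subs N$ is the finite pool of already-committed connectors) lies in $N$ and remains uncountable so long as $x$ still lies in it, so Fact~\ref{elfact} provides a next hub in $N$ avoiding the forbidden finite pool; whenever a particular $A_i$ keeps obstructing distinctness it is swapped for a later $A_j$ on the list, and iterating completes the configuration.
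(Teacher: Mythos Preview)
Your overall strategy---reach a configuration for Claim~\ref{amalg1} and contradict maximality---is the same as the paper's, and Case~1 and sub-case~2a are fine (the line ``$N_G(x)\cap\bigcup(\mc A\cap N)\subs N$'' overclaims slightly, but you only need the infinitely many $y_i\in N$, which you do have).

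The gap is in sub-case~2b. Your ``swapping'' mechanism is not well-defined and need not terminate: once hubs $x_1,\dots,x_{k-1}$ and their connectors into $A_0,\dots,A_{n-1}$ are committed, replacing some $A_i$ by a later $A_j$ destroys the connectors $y_{i,k'}\in A_i\cap N_G(x_{k'})$ for $k'<k$, and there is no reason the already-chosen $x_{k'}$ have any neighbour in $A_j$ at all (they were selected from the old $P$, not the new one). Restarting after each swap gives no bound on the number of swaps. Moreover, the uncountability of $R_{A_0}$, which you flag as the key resource, is never actually used anywhere in the argument you sketch.

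The paper sidesteps the whole difficulty by a sharper choice of the $A_i$ at the outset. Assuming the conclusion fails, the previous lemma gives $|N_G(x)\cap N|<n$, so $\bigl(N_G(x)\cap\bigcup(\mc A\cap N)\bigr)\setm N$ is infinite; hence one can pick distinct $A_0,\dots,A_{n-1}\in\mc A\cap N$ with $(N_G(x)\cap A_i)\setm N\neq\empt$ for each $i<n$. Since every finite $F\in N$ satisfies $F\subs N$, the point $x$ itself, together with witnesses in $(A_i\cap N_G(x))\setm N$, shows
\[
N\models\ \forall F\in[V]^{<\oo}\ \exists z\in V\setm F\ \exists y_i\in(A_i\cap N_G(z))\setm F\quad(i<n),
\]
and iterating this statement in $H(\Theta)$ produces the $x_k$'s and pairwise distinct $y_{i,k}$'s in one pass---no swapping, no sub-cases, and no $R_A$ sets.
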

\begin{proof}
 Suppose that the conclusion fails; by the previous lemma, we have $|N_G(x)\cap N|< n$. 
In particular, there is sequence of distinct $A_i\in \mc A\cap N$ for $i<n$ so $$(N_G(x)\cap A_i)\setm N\neq \emptyset$$ for all $i<n$ (as $N_G(x)\cap A$ is finite if $A\in N\cap \mc A$). 

Thus $$N\models \forall F\in[V]^{<\omega}\exists x\in V\setm F\text{ and } y_i\in (A_i\cap N_G(x))\setm F.$$

Now, we can find distinct $\{y_{i,k}:i<n, k<n\}$ and $X= \{x_k:k<n\}$ so that $$y_{i,k}\in A_i\cap N_G(x_k).$$ Finally, $\cup \{A_i:i<n\}\cup X$ is $n$-connected by Claim \ref{amalg1} which contradicts the maximality of $A_i$. 
\end{proof}

\begin{proof}[Proof of Theorem \ref{nconn}]
Let $G,\mc A$ be as above and suppose that every $A\in \mc A$ is countably chromatic; we will show that in this case, $G$ is countably chromatic.

First, we prove that $\bigcup \mc A$ is countably chromatic. Take a Davies-tree covering $\mc A$ i.e. a sequence $(M_\alpha)_{\alpha<\kappa}$ of countable elementary submodels such that for all $\alpha<\kappa$ there is a  \emph{finite} sequence of elementary submodels $(N_{\alpha,j})_{j<m_\alpha}$ so that $$\bigcup \mc M_{<\alpha}=\bigcup\{N_{\alpha,j}:j<m_\alpha\},$$
with $G\in M_\alpha\cap N_{\alpha,j}$ and $\mc A\subs \bigcup\{M_\alpha:\alpha<\kappa\}$.

Let $\mc A_{<\alpha}=\mc A\cap \bigcup \mc M_{<\alpha}$ and $\mc A_\alpha=(\mc A\cap M_\alpha)\setm \mc A_{<\alpha}$ for $\alpha<\kappa$. Well order $\mc A$ as $\{A_\xi:\xi<\mu\}$ so that 
\begin{enumerate}
\item $A_\zeta\in \mc A_{<\alpha},A_\xi\in \mc A\setm \mc A_{<\alpha}$ implies $\zeta<\xi$ and
\item $\mc A_\alpha\setm \mc A_{<\alpha}$ has order type $\leq \omega$
\end{enumerate}

for all $\alpha<\kappa$. 

We claim that the above enumeration of $\mc A$ satisfies Lemma \ref{ordlemma} and thus $\bigcup \mc A$ is countably chromatic. By the second property of our enumeration and Observation \ref{inobs} (4), it suffices to show that $$|N_G(x)\cap\bigcup \mc A_{<\alpha}|<\oo $$ if $x\in A\setm \bigcup \mc A_{<\alpha}$ for all $A\in \mc A_\alpha\setm \mc A_{<\alpha}$ and $\alpha<\kappa$.

However, as $\mc A_{<\alpha}=\bigcup\{\mc A\cap N_{\alpha,j}:j<m_\alpha\}$, this should be clear from applying Lemma \ref{mlemma1} for each of the finitely many models $N_{\alpha,j}$ where $j<m_\alpha$.

Now, we show that $G$ is countably chromatic; otherwise, the graph spanned by $V\setminus \bigcup \mc A$ is uncountably chromatic. However, every uncountably chromatic graph, and so $V\setminus \bigcup \mc A$ as well, contains an $n$-connected subgraph (actually a copy of $K_{n,\omg}$ by \cite{EH0}) which contradicts the definition of $\mc A$.
\end{proof}

We note that Komj\'ath also proves that every uncountably chromatic subgraph contains an $n$-connected uncountably chromatic subgraph with minimal degree $\omega$; we were not able to deduce this stronger result with our tools.

It is an open problem whether every uncountably chromatic graph $G$ contains an $\omega$-connected subgraph \cite{koperev} (i.e. removing finitely many vertices leaves the graph connected). It was recently proved however that

\begin{theorem}[\cite{trees}] There is a graph of chromatic number $\omg$ and size $2^\oo$ such that every uncountable set is separated by a finite set. In particular, every $\omega$-connected subset is countable.
\end{theorem}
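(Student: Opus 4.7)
The plan is in two stages. First, note that adjoining $2^\oo$ isolated vertices to any graph preserves both $\chi=\omg$ and the finite-separation property, so it suffices to construct a graph $H$ on a vertex set of size $\omg$ with $\chi(H)=\omg$ in which every uncountable set admits a finite separator; the theorem then follows by padding.

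The natural candidate for $H$ is the comparability graph of an $\omg$-tree $T$: vertices are nodes of $T$ and $s\sim_H t$ iff $s\subsetneq t$ or $t\subsetneq s$. Independent sets of $H$ are exactly antichains of $T$, so choosing $T$ non-special (e.g., a non-special Aronszajn or a Souslin tree) yields $\chi(H)\ge \omg$, while colouring each node by its level shows $\chi(H)\le\omg$. For the separation property, the guiding structural fact is that in a tree any path between incomparable nodes passes through a common ancestor. Hence for uncountable $W\subseteq T$: if the minimum-level set of $W$ has at least two elements, then $H[W]$ is already disconnected, since any path between two such minima would have to go below the minimum level, which is impossible inside $W$. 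If the minimum level is attained by a unique $v\in W$, then either $W\setminus\{v\}$ meets at least two immediate subtrees below $v$, in which case $\{v\}$ is a finite separator, or $W\setminus\{v\}$ concentrates in a single such subtree and one recurses.

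The main obstacle will be to force this recursion to terminate after \emph{finitely} many steps. For a generic Aronszajn tree the descent may continue along a trunk for countably many levels before visibly branching, yielding only a countable separator, not a finite one. The substance of \cite{trees} must therefore be the construction of an $\omg$-tree $T$ with a strong branching property guaranteeing that every uncountable $W\subseteq T$ fans out into at least two subtrees after only finitely many ``trunk'' steps, together with the verification that such a $T$ remains non-special so that $\chi(H)=\omg$ is preserved. Both ingredients are delicate, as increased branching of uncountable subsets tends to specialise the tree. Once the tree with these properties is in hand, the consequence that every $\omega$-connected subgraph is countable is immediate from the contrapositive: an $\omega$-connected vertex set cannot be disconnected by removing any finite set of vertices, so the finite-separation property forces such a set to be countable.
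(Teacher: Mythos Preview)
This theorem is not proved in the paper under review; it is merely stated with a citation to the preprint \cite{trees}. There is therefore no in-paper argument to compare your proposal against.

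On its own merits, what you have written is a strategy sketch rather than a proof, and you say as much: the decisive step---constructing a non-special $\omega_1$-tree whose comparability graph has the finite-separation property for every uncountable subset---is explicitly left open. There is, however, a further gap you did not flag. Your candidate trees (a non-special Aronszajn tree, or a Suslin tree) need not exist in ZFC: under $\mathrm{MA}_{\omega_1}$ every Aronszajn tree is special and there are no Suslin trees, so the comparability graph of any $\omega_1$-tree without uncountable branches is countably chromatic in that model. If the result of \cite{trees} is intended as a ZFC theorem---and the phrasing here suggests it is---then either one needs a ZFC source of non-special trees of the required kind, or the graph is not simply a comparability graph. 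The title of \cite{trees} hints that ladder systems are involved, pointing toward a more elaborate construction than the one you outline. Your padding-by-isolated-vertices reduction is correct but probably cosmetic; the $2^\omega$ in the statement most likely reflects the actual construction rather than a target to be met after the fact.
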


\section{Future work}

There are great possibilities in the use of Davies-trees beyond finding new proofs or eliminating CH from known results (which already is a great deal). Recently, L. Soukup started to develop the analogue of Davies-trees with $\sigma$-closed models.

\begin{theorem}[\cite{sigmaD}] Suppose $V=L$. Then for every cardinal $\kappa$ there is a sequence $(M_\alpha)_{\alpha<\kappa}$ of elementary submodels of $H(\Theta)$ covering $\kappa$ such that 
\begin{enumerate}
	\item $[M_\beta]^\oo\subs M_\beta$ and $|M_\beta|=\omg$,
	\item there are $N_{\beta,j}\prec H(\Theta)$ with $[N_{\beta,j}]^\oo\subs N_{\beta,j}$ for $j<\oo$ such that $$\bigcup\{M_{\alpha}:\alpha<\beta\}=\bigcup\{N_{\beta,j}:j<\oo\}$$
\end{enumerate} for all $\beta<\kappa$.
\end{theorem}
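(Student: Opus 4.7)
My plan is to imitate the tree construction of Fact \ref{main}, replacing countable elementary submodels with $\sigma$-closed elementary submodels of size $\omg$ at the leaves, and replacing continuous chains of elementary submodels with continuous chains of $\sigma$-closed elementary submodels at each interior node. Call a cardinal $\lambda$ \emph{admissible} if $\lambda^\omega=\lambda$; under GCH (which holds in $L$) these are exactly the cardinals of uncountable cofinality, and they are precisely the possible sizes of $\sigma$-closed sets.

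First I would build the root $M(\emptyset)$ as a $\sigma$-closed elementary submodel of $H(\Theta)$ containing $\kappa$ as a subset and of admissible size $\geq\kappa$. Existence is standard: iteratively close a seed set under the definable Skolem functions coming from the canonical well-ordering of $L$, and under countable sequences, using GCH to control the resulting cardinality. Recursively, at each interior node $a$ with $M(a)$ of admissible size $\lambda>\omg$, I would cover $M(a)$ by a continuous chain $(M(a\smf\xi))_{\xi<\zeta_a}$ of $\sigma$-closed elementary submodels of strictly smaller admissible size, each belonging to $M(a)$. Choosing $\zeta_a$ of uncountable cofinality ensures that unions at limit stages of the chain remain $\sigma$-closed, since any countable sequence in the union is absorbed by some proper initial segment. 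The recursion stops at leaves of size $\omg$.

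With the leaves lex-ordered as $(M_\alpha)_{\alpha<\kappa}$, for each $\beta<\kappa$ corresponding to a leaf $b$ of depth $n$ the initial segment $\bigcup\{M_\alpha:\alpha<\beta\}$ decomposes as $\bigcup\{N_{\beta,i}:i<n\}$, just as in the proof of Fact \ref{main}, where $N_{\beta,i}:=\bigcup\{M((b\uhp (i-1))\smf\xi):\xi<b(i-1)\}$. Each such $N_{\beta,i}$ is a union of a $\sigma$-closed elementary chain of uncountable cofinality, hence is itself $\sigma$-closed and elementary in $H(\Theta)$. This yields properties (1) and (2) of the theorem, provided the tree has depth at most $\omega$.

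The main obstacle is arranging this bounded depth. The descent from $|M(\emptyset)|\geq\kappa$ down to $\omg$ must terminate in at most $\omega$ steps through strictly smaller admissible sizes, and at each step one must exhibit a continuous chain of $\sigma$-closed submodels of an appropriate length whose union is the parent. This is subtle at singular cardinals of countable cofinality, which are not admissible and must be ``skipped'' in the descent; for arbitrary $\kappa$ it cannot be done from GCH alone. Here the full strength of $V=L$ enters via $\square_\lambda$-sequences at every uncountable $\lambda$ together with condensation, which yield coherent filtrations of every admissible cardinal by smaller admissible structures and can be fed into the recursion to keep all branch lengths countable. Verifying that this coherent control can be carried out simultaneously along the entire tree is the technically hardest step.
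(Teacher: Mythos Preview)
The paper does not prove this theorem; it is stated in the ``Future work'' section as a result of L.~Soukup with a pointer to \cite{sigmaD}, and no argument is supplied. So there is nothing in the paper to compare your attempt against directly.

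On the merits of your outline: the overall plan---replay the tree of Fact~\ref{main} with $\sigma$-closed models of size $\omg$ at the leaves---is the natural one, and you are right that the difficulty concentrates near singular cardinals of countable cofinality and that $\square$ is the ingredient $V=L$ supplies beyond GCH. Two specific points, however, are garbled. First, your chains cannot be simultaneously \emph{continuous} and made of $\sigma$-closed models once their length exceeds $\omega$: at any internal limit $\eta<\zeta_a$ with $\mathrm{cf}(\eta)=\omega$, the union $\bigcup_{\xi<\eta}M(a\smf\xi)$ fails to be $\sigma$-closed (a diagonal countable set is not captured by any $M(a\smf\xi)$), so continuity would force $M(a\smf\eta)$ to violate your own hypothesis. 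Choosing $\zeta_a$ of uncountable cofinality handles only the top of the chain, not its interior limits. Second, tree depth is not the obstacle: admissible cardinals strictly decrease along any branch, so every branch is automatically finite and you get finitely many $N_{\beta,i}$ per leaf---well within the countably many the statement allows. The real difficulty sits one step earlier, at nodes of size $\mu^+$ with $\mathrm{cf}(\mu)=\omega$ (e.g.\ $\aleph_{\omega+1}$), where every $\sigma$-closed elementary submodel of smaller size already has size $<\mu$; building filtrations there, and doing so \emph{coherently} across the whole tree so that every initial segment $\bigcup_{\alpha<\beta}M_\alpha$ still decomposes as a countable union of $\sigma$-closed submodels, is precisely where $\square$ enters. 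Your final paragraph gestures at this correctly, but that is where the entire content of the proof lives, not in the ``bounded depth'' reformulation you gave.
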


See the presentation \cite{sigmaD} for more on $\sigma$-Davies-trees and further applications of ordinary Davies-trees.

\end{document}